\newcommand{\Hom}{\operatorname{Hom}}
\newcommand{\Gen}{\operatorname{Gen}}
\renewcommand{\dim}{\operatorname{dim}}
\newcommand{\Ext}{\operatorname{Ext}}
\renewcommand{\mod}{\operatorname{mod}}
\newcommand{\undim}{\underline{\dim}}
\newtheorem{thm}{Theorem}
\newtheorem{lemma}[thm]{Lemma}
\newtheorem{proposition}[thm]{Proposition}
\newtheorem{corollary}[thm]{Corollary}
\theoremstyle{definition}
\newtheorem{definition}[thm]{Definition}
\newtheorem{remark}[thm]{Remark}
\title{A uniqueness property of $\tau$-exceptional sequences}
\author{Eric J. Hanson}
\address{(E. J. Hanson) Département de Mathématiques, Université de Sherbrooke;\newline \indent Département de Mathématiques, LACIM, Université du Québec à Montréal}
\email{ejhanso3@ncsu.edu}
\author{Hugh Thomas}
\address{(H. Thomas) Département de Mathématiques, LACIM, Université du Québec à Montréal}
\email{thomas.hugh\_r@uqam.ca}
\thanks{This version of the article has been accepted for publication, after peer review (when applicable) and is subject
to Springer Nature’s AM terms of use, but is not the Version of Record and does not reflect post-acceptance
improvements, or any corrections. The Version of Record is available online at: \url{https://doi.org/10.1007/s10468-023-10226-w}.}
\begin{document}
\maketitle

\begin{abstract}
Recently, Buan and Marsh showed that if two complete $\tau$-exceptional sequences agree in all but at most one term, then they must agree everywhere, provided the algebra is $\tau$-tilting finite. They conjectured that the result holds without that assumption. We prove their conjecture. Along the way, we also show that the dimension vectors of the modules in a $\tau$-exceptional sequence are linearly independent.
\end{abstract}

\section{Introduction}

Let $\Lambda$ be a finite-dimensional basic algebra over a field $K$, and $\mod\Lambda$ the category of finitely-generated (left) $\Lambda$-modules. A sequence $(X_1,\ldots,X_k)$ of indecomposable objects in $\mod\Lambda$ is called an \emph{exceptional sequence} if the endomorphism ring of each $X_i$ is a division algebra, 
$\Hom(X_j,X_i) = 0$ for all $1 \leq i < j \leq k$, and $\Ext^m(X_j,X_i) = 0$ for all $1 \leq i \leq j \leq k$ and $m > 0$. The exceptional sequence is said to be \emph{complete} if $k$ is equal to the number of (isomorphism classes of) simple objects in $\mod\Lambda$. We denote this number by $n$ throughout the paper.

Exceptional sequences play a particularly notable role in the study of hereditary algebras. In particular, the set of complete exceptional sequences over such an algebra admits a transitive action by the braid group on $n$ strands \cite{CB,ringel}. Complete exceptional sequences can also be used to parameterize the maximal chains in the corresponding poset of noncrossing partitions \cite{IS}. Exceptional sequences are also used to define the ``cluster morphism category'' of a hereditary algebra \cite{IT}, which in turn can be used to construct an Eilenberg-MacLane space for the corresponding ``picture group'', as defined in \cite{ITW}.

Unlike in the hereditary case, there exist non-hereditary algebras for which complete exceptional sequences do not exist \cite[Introduction]{BM_exceptional}. To address this problem, Buan and Marsh used the $\tau$-tilting theory of \cite{AIR}, and specifically the $\tau$-tilting reduction of \cite{jasso, DIRRT}, to define the two closely related notions of \emph{$\tau$-exceptional sequences} and \emph{signed $\tau$-exceptional sequences} (see Definition~\ref{def:tau_ex}) in \cite{BM_exceptional}. Importantly, it is an immediate consequence of the definition that (signed) $\tau$-exceptional sequences can always be ``completed'', in the sense discussed immediately after Definition~\ref{def:tau_ex}. This development has resulted in several recent papers generalizing the relationships between exceptional sequences, noncrossing partitions, cluster morphism categories, and picture groups to their ``$\tau$-tilting theoretic analogs'', for example \cite{BaH,borve,BM_wide,BuH,HI,MT}.

In the recent paper \cite{BM_mutation}, Buan and Marsh begin to investigate the question of whether the set of complete signed $\tau$-exceptional sequences admits a (transitive) braid group action. As a step on the way, they prove that if $\Lambda$ is $\tau$-tilting finite (in the sense of \cite{DIJ}), then there do not exist complete $\tau$-exceptional sequences which ``differ in only one place''. They also conjecture that this result holds without the assumption of $\tau$-tilting finiteness. This generalizes \cite[Lemma~2]{CB}, where the analogous statement is shown for complete exceptional sequences over hereditary algebras with no finiteness assumptions. In this paper, we prove Buan and Marsh's conjecture as Theorem~\ref{thm:main} below. This also implies uniqueness results for the signed $\tau$-exceptional sequences of \cite{BM_exceptional} and the \emph{brick-}$\tau$-exceptional sequences of \cite{BaH}, see Definition~\ref{def:tau_ex} and Corollary~\ref{cor:main}. We work in the generality of elementary algebras, which in particular includes all cases where the field $K$ is algebraically closed.

Our proof is based upon King's theory of stability conditions \cite{king}, and specifically the relationship between semistability and Jasso's $\tau$-perpendicular categories established in \cite[Proposition~3.13]{BST}. (We review the relevant definitions below.) This allows us to show that the dimension vectors of the objects in a $\tau$-exceptional sequence are linearly independent (Proposition~\ref{linindep}(1)), from which we deduce our main result. This linear independence also holds for signed $\tau$-exceptional sequences and brick-$\tau$-exceptional sequences (Proposition~\ref{linindep}(2,3)).

\subsection*{Acknowlegements}
The authors are grateful to Aslak Bakke Buan for helpful discussions.
H.T. was partially supported by an NSERC Discovery Grant and the Canada Research Chairs program. Both authors are grateful to the Centre for Advanced Study at the Norwegian Academy of Science and Letters in Oslo for the excellent working conditions they provided. 

\section{Definitions}

Let $X \in \mod\Lambda$ be a basic module, and denote by $\mathrm{rk}(X)$ the number of indecomposable direct summands of $X$. The module $X$ is said to be \emph{$\tau$-rigid} if $\Hom(X,\tau X) = 0$, where $\tau$ denotes the Auslander-Reiten translation. The \emph{$\tau$-perpendicular category} of $X$ is the subcategory
$$\mathcal{J}(X) = {}^\perp \tau X\cap X^\perp 
= \{Y \in \mod\Lambda \mid \Hom(X,Y) = 0 = \Hom(Y,\tau X)\}.$$
By \cite[Theorem~4.12]{jasso}, there exists a finite-dimensional algebra $\Lambda'$ with $n-\mathrm{rk}(X)$ simple modules which admits an exact equivalence $\mathcal{J}(X) \approx \mod \Lambda'$.

Before defining $\tau$-exceptional sequences and their variants, we recall several pieces of notation. For $X \in \mod\Lambda$, we denote by $X[1]$ the \emph{shift} of $X$. This notation and terminology refers to the chain complex in the bounded derived category $\mathcal{D}^b(\mod\Lambda)$ which consists only of the module $X$ concentrated in degree -1, but for the purposes of this paper it is sufficient to view $X[1]$ as a ``tagged'' version of the module $X$. We then denote $\mathcal{C}(\mod\Lambda) = \mod\Lambda \sqcup \mod\Lambda[1]$, and for $X \in \mod\Lambda$, we denote
$|X[1]| = |X| = X.$
Finally, for $X \in \mod\Lambda$ indecomposable, let $\mathrm{rad}(X,X)$ denote the set of non-invertible endomorphisms of $X$. We then denote
$$\beta(X) = X/\sum_{f \in \mathrm{rad}(X,X)} \mathrm{im}(f) = X/\mathrm{rad}_{\mathrm{End}(X)}X.$$
Recall that a module $Y$ is called a \emph{brick} if every nonzero endomorphism of $Y$ is invertible, and is called an \emph{f-brick} if an addition the smallest torsion class containing $Y$ is functorially finite. (The definitions of torsion classes and functorial finiteness are not needed for this paper.) The \emph{brick-$\tau$-rigid correspondence} of \cite{DIJ} then says that the association $X \mapsto \beta(X)$ induces a bijection from the set of indecomposable $\tau$-rigid modules to the set of f-bricks.

\begin{definition}\label{def:tau_ex}\
    \begin{enumerate}
        \item \cite[Definition~1.3]{BM_exceptional} A sequence $(X_1,\ldots,X_k)$ of indecomposable objects in $\mod\Lambda$ is called a \emph{$\tau$-exceptional sequence} if $X_k$ is $\tau$-rigid and $(X_1,\ldots,X_{k-1})$ is a $\tau$-exceptional sequence in $\mathcal{J}(X_k)$, identified with the module category $\mod\Lambda'$ as above.
        \item \cite[Definition~1.3]{BM_exceptional} A sequence $(\mathcal{U}_1,\ldots,\mathcal{U}_k)$ of objects in $\mathcal{C}(\mod\Lambda)$ is called a \emph{signed $\tau$-exceptional sequence} if (i) one of (i.a) $\mathcal{U}_k \in \mod\Lambda$ and is $\tau$-rigid or (i.b) $|\mathcal{U}_k|$ is projective, and (ii) $(\mathcal{U}_1,\ldots,\mathcal{U}_{k-1})$ is a signed $\tau$-exceptional sequence in $\mathcal{C}(\mathcal{J}(|U_k|))$, identified with the category $\mathcal{C}(\mod\Lambda')$ for $\Lambda'$ as above.
        \item A sequence $(X_1,\ldots,X_k)$ of objects in $\mod\Lambda$ is called a \emph{brick-$\tau$-exceptional sequence} if there exists a $\tau$-exceptional sequence $(Y_1,\ldots,Y_k)$ such that $X_i = \beta(Y_i)$ for all $1 \leq i \leq k$.
    \end{enumerate}
    In all three cases, we say that the sequence is \emph{complete} if $k = n$.
\end{definition}

Any signed $\tau$-exceptional sequence $(\mathcal{U}_1,\ldots,\mathcal{U}_k)$  can be completed to a complete signed $\tau$-ex\-cep\-tion\-al sequence $(\mathcal{V}_1,\ldots,\mathcal{V}_{n-k},\mathcal{U}_1,\ldots,\mathcal{U}_k)$, by combining \cite[Theorem 5.4]{BM_exceptional} and \cite[Theorem 2.10]{AIR}.
Note that, unlike for exceptional sequences over hereditary algebras, it is not in general true that a signed $\tau$-exceptional sequence can be completed by inserting modules at arbitrarily specified positions in the sequence.
These same statements follow for unsigned and brick-$\tau$-exceptional sequences, as well.

\begin{remark}\
    \begin{enumerate}
        \item It follows immediately from the definition that if $(\mathcal{U}_1,\ldots,\mathcal{U}_k)$ is a signed $\tau$-exceptional sequence, then $(|\mathcal{U}_1|,\ldots,|\mathcal{U}_k|)$ is a $\tau$-exceptional sequence. On the other hand, allowing for the (relatively) projective objects in a $\tau$-exceptional sequence to be ``tagged'' is critical in defining the ($\tau$-)cluster morphism categories of \cite{BuH,BM_wide,IT}.
        \item By \cite[Theorem~1.4]{DIJ}, the map $\beta$ induces a bijection between the set of $\tau$-exceptional sequences and the set of brick-$\tau$-exceptional sequences.
        Note, however, that the inverse of this bijection is defined more subtly as the inverse of the brick-$\tau$-rigid correspondence $\beta$ generally changes when one restricts to a subcategory of the form $\mathcal{J}(X)$. See \cite[Remark~3.19]{BaH} for an example.
        \item For further motivation of the notion of brick-$\tau$-exceptional sequences, see \cite{BaH}. Specifically, we note that brick-$\tau$-exceptional sequences appear in \cite[Definition~5.7]{BaH} without being given an explicit name.
    \end{enumerate}
\end{remark}

We conclude this section by recalling the notions of $g$-vectors and semistability. Choose some indexing $P(1),\ldots,P(n)$ of the indecomposable projectives in $\mod\Lambda$. For $X \in \mod\Lambda$, let $P_1^X \rightarrow P_0^X \rightarrow X \rightarrow 0$ be a minimal projective presentation of $X$, and decompose
$$P_0^X = \bigoplus_{i = 1}^n P(i)^{\oplus p_i},\qquad\qquad P_1^X = \bigoplus_{i = 1}^n P(i)^{\oplus m_i}.$$
The \emph{$g$-vector} of $X$ is then defined as
$$g_X = \sum_{i = 1}^n (p_i-m_i)\cdot e_i \in \mathbb{R}^n,$$
where $e_i$ denotes the $i$-th standard basis vector. Similarly, the \emph{dimension vector} of $X$ is defined as
$$\undim X = \sum_{i = 1}^n \left(\dim\Hom(P(i),X)\right)\cdot e_i.$$
We note that while we consider $g_X, \undim X \in \mathbb{R}^n$, both vectors are integer-valued. We also define $\undim X[1] = -\undim X$.

\begin{definition}\cite{king}
    Let $\theta \in \mathbb{R}^n$ and let $X \in \mod\Lambda$. Then $X$ is \emph{$\theta$-semistable} if (a) $\theta\cdot \undim X = 0$ and (b) $\theta \cdot \undim X' \leq 0$ for all submodules $X' \subseteq X$. (There are two possible choices of sign convention here, and both are used in the literature.).
\end{definition}

\section{Proof of main result}

In order to prove our main result, we need some lemmas. Because the proofs are
short and instructive, we have preferred to include them.

For $X \in \mod\Lambda$ we denote by $\Gen X$ the full subcategory of $\mod\Lambda$ consisting of those modules $M$ for which there exists a surjective map $X^{\oplus r} \rightarrow M$ for some positive integer $r$.

\begin{lemma} \label{nohom} If $X$ is $\tau$-rigid, then $\Hom(\Gen X,\tau X)=0$. \end{lemma}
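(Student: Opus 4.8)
The plan is to unwind the definition of $\Gen X$ and reduce everything to the hypothesis $\Hom(X,\tau X)=0$ by a one-step cancellation argument. Fix an arbitrary $M \in \Gen X$; by definition there is a positive integer $r$ and a \emph{surjection} $f\colon X^{\oplus r} \twoheadrightarrow M$. To prove $\Hom(M,\tau X)=0$, I take an arbitrary $g \in \Hom(M,\tau X)$ and aim to show $g=0$.

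First I would precompose with $f$ to obtain $g\circ f \in \Hom(X^{\oplus r},\tau X)$. Since $\Hom(X^{\oplus r},\tau X) \cong \Hom(X,\tau X)^{\oplus r}$ and the latter vanishes by the $\tau$-rigidity hypothesis, I conclude $g \circ f = 0$. The final step is to cancel $f$ on the left: because $f$ is surjective it is an epimorphism in $\mod\Lambda$, so $g \circ f = 0$ forces $g = 0$. As $g$ was arbitrary this gives $\Hom(M,\tau X)=0$, and as $M \in \Gen X$ was arbitrary we obtain $\Hom(\Gen X,\tau X)=0$.

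There is no real obstacle here; the only point requiring a moment's care is the justification that $f$ may be cancelled, which is immediate since surjective maps of modules are epimorphisms. The essential content is simply that every object of $\Gen X$ is a quotient of a power of $X$, so any map to $\tau X$ lifts to a map out of $X^{\oplus r}$ and is therefore controlled by $\Hom(X,\tau X)$.
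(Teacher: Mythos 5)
Your proof is correct and is essentially the paper's own argument: the paper also precomposes a hypothetical map $M \to \tau X$ with the surjection $X^{\oplus r} \twoheadrightarrow M$ and uses $\Hom(X,\tau X)=0$ to conclude, merely phrasing the epimorphism-cancellation step contrapositively (a nonzero map from $M$ induces a nonzero map from $X$). No further comment is needed.
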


\begin{proof} Let $M\in \Gen X$. If $X = 0$ then we are done. Otherwise, there is a surjective map from $X^{\oplus r}$ to $M$ for some positive $r$. Now a non-zero morphism from $M$ to $\tau X$ induces a non-zero morphism from $X$ to $\tau X$, but that is impossible by hypothesis.
\end{proof}

The following result can be found in \cite[Theorem~1.4a]{AR}. We give a proof based on that of \cite[Proposition~5.3]{AIR}.

\begin{lemma} \label{interp} Let $g_X$ be the $g$-vector of $X$. Let $L$ be
  a module. Then:
  $$g_X\cdot \undim L  = \dim \Hom(X,L)-\dim\Hom(L,\tau X).$$
\end{lemma}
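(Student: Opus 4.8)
The plan is to prove the formula $g_X \cdot \undim L = \dim\Hom(X,L) - \dim\Hom(L,\tau X)$ by applying the functor $\Hom(-,L)$ to the minimal projective presentation of $X$ and bookkeeping dimensions. The key observation is that both sides are additive in $L$ (and in $X$), so the identity can ultimately be reduced to checking it on projective modules, where $\tau X$ contributes nothing. Let me sketch the main steps.

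First I would recall that $g_X = \sum_i (p_i - m_i) e_i$ comes from the minimal projective presentation $P_1^X \to P_0^X \to X \to 0$ with $P_0^X = \bigoplus P(i)^{\oplus p_i}$ and $P_1^X = \bigoplus P(i)^{\oplus m_i}$. The left-hand side $g_X \cdot \undim L$ therefore equals $\sum_i (p_i - m_i)\dim\Hom(P(i),L)$, which by additivity of $\Hom(-,L)$ over the summands is exactly $\dim\Hom(P_0^X, L) - \dim\Hom(P_1^X, L)$. So the content is to identify this alternating sum of Hom-dimensions with $\dim\Hom(X,L) - \dim\Hom(L,\tau X)$.

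Next I would apply the left-exact contravariant functor $\Hom(-,L)$ to the presentation $P_1^X \xrightarrow{d} P_0^X \to X \to 0$, yielding the exact sequence
$$0 \to \Hom(X,L) \to \Hom(P_0^X, L) \to \Hom(P_1^X, L) \to \operatorname{coker} \to 0.$$
Taking the alternating sum of dimensions gives $\dim\Hom(P_0^X,L) - \dim\Hom(P_1^X,L) = \dim\Hom(X,L) - \dim(\operatorname{coker})$, where $\operatorname{coker}$ is the cokernel of $\Hom(d,L)$. The crux is then to identify this cokernel: it should be naturally isomorphic to $\Hom(L, \tau X)$ (or at least have the same dimension). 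This is precisely the Auslander-Reiten-theoretic input, and it is where the definition of $\tau X$ via the minimal projective presentation enters — applying the Nakayama functor to $P_1^X \to P_0^X$ and taking the kernel produces $\tau X$, and the desired identification is the defining feature of the AR translate encoded through the transpose.

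The main obstacle is establishing that the cokernel of $\Hom(P_1^X,L) \leftarrow \Hom(P_0^X,L)$, really $\operatorname{coker}(\Hom(d,L))$, has dimension $\dim\Hom(L,\tau X)$. Following the proof of \cite[Proposition~5.3]{AIR}, I expect this to come from the functorial description of $\tau$: applying the Nakayama functor $\nu$ to the projective presentation and using the standard isomorphism $\Hom(L,\tau X) \cong D\overline{\Hom}(X,L)$ (the stable Hom, or equivalently the relevant Ext/Tor duality). One clean route is to use the fact that $\undim L$ determines $\dim\Hom(P(i),L)$ and that $\Hom(-,L)$ applied to the presentation, combined with the identity $\tau X = \ker(\nu P_1^X \to \nu P_0^X)$, gives the cokernel term via the defining exact sequence for $\tau$. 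Since the statement cites \cite[Theorem~1.4a]{AR} directly, I would verify the identification on a minimal presentation and then invoke additivity to conclude in general, rather than reproving the full AR-duality machinery.
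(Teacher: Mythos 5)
Your overall skeleton matches the paper's proof: rewrite $g_X\cdot\undim L$ as $\dim\Hom(P_0^X,L)-\dim\Hom(P_1^X,L)$, apply $\Hom(-,L)$ to the minimal presentation to get the four-term exact sequence, and identify the cokernel term with (the dual of) $\Hom(L,\tau X)$. But that identification is the entire content of the lemma, and it is precisely there that your proposal goes wrong: the ``standard isomorphism'' you invoke, $\Hom(L,\tau X)\cong D\overline{\Hom}(X,L)$, is false. Stable Hom cannot appear here. For instance, let $\Lambda$ be the path algebra of the quiver $1\to 2$, $X=S(1)$ the simple at vertex $1$, and $L=S(2)=\tau X$. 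Then $\Hom(L,\tau X)\cong K$ is one-dimensional, while $\overline{\Hom}(X,L)$ is a quotient of $\Hom(S(1),S(2))=0$; had the cokernel been $D\overline{\Hom}(X,L)$, the lemma would read $-1=g_X\cdot\undim L=\dim\Hom(X,L)=0$. (The Auslander--Reiten formulas put the stabilization on the other Hom space, as in $D\Ext^1(X,L)\cong\overline{\Hom}(L,\tau X)$, which is not what this lemma needs, since it involves plain Hom spaces.)

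What is actually true, and what the paper proves, is
$$\operatorname{coker}\bigl(\Hom(P_0^X,L)\to\Hom(P_1^X,L)\bigr)\;\cong\; D\Hom(L,\tau X)$$
with the ordinary, unstable Hom. Two correct routes: (i) the cokernel is $\operatorname{Tr}X\otimes_\Lambda L$, because $\Hom(P,L)\cong\Hom(P,\Lambda)\otimes_\Lambda L$ for $P$ projective and $-\otimes_\Lambda L$ is right exact on $\Hom(P_0^X,\Lambda)\to\Hom(P_1^X,\Lambda)\to\operatorname{Tr}X\to 0$; then tensor--Hom duality $D(M\otimes_\Lambda L)\cong\Hom_\Lambda(L,DM)$ gives $D(\operatorname{Tr}X\otimes_\Lambda L)\cong\Hom(L,D\operatorname{Tr}X)=\Hom(L,\tau X)$; or (ii), as in the paper, apply $\Hom(L,-)$ to the exact sequence $0\to\tau X\to\nu P_1^X\to\nu P_0^X$ defining $\tau X$, use the natural isomorphism $\Hom(L,\nu P)\cong D\Hom(P,L)$, and splice the result with the dual of your four-term sequence. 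Your sketch gestures at route (ii) but never carries out the splicing, hedging instead with the false stable-Hom formula. Separately, the reduction-by-additivity idea bracketing your sketch does not work: $g$-vectors are not additive on short exact sequences (e.g.\ $0\to S\to\Lambda\to S\to 0$ over $K[x]/(x^2)$, where $g_S=0$ but $g_\Lambda\neq 0$), and additivity of the right-hand side on short exact sequences in $L$ is essentially equivalent to the lemma itself, so one cannot ``reduce to projectives'' without circularity; fortunately your main argument does not actually rely on it.
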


\begin{proof}
    Let $P_1^X \xrightarrow{f} P_0^X \xrightarrow{q} X \rightarrow 0$ be a minimal projective presentation of $X$. By definition, there is a short exact sequence $0 \xrightarrow{\iota} \tau X \rightarrow \nu P_1^X \xrightarrow{\nu f} \nu P_0^X$, where $\nu$ denotes the Nakayama functor. Denoting by $D$ the $K$-linear dual, we then have a commutative diagram of short exact sequences
    $$
    \begin{tikzcd}[column sep = 1em]
        0 \arrow[r] & \Hom(L, \tau X) \arrow[r,"\iota_*"] & \Hom(L, \nu P_1^X) \arrow[r,"(\nu f)_*"] \arrow[d,leftrightarrow,"\wr"] & \Hom(L,\nu P_0^X\arrow[d,leftrightarrow,"\wr"])\\
        && D\Hom(P_1^X,L) \arrow[r,"Df^*"] & D\Hom(P_0^X,L) \arrow[r,"Dq^*"] & D\Hom(X,L) \arrow[r] & 0.
    \end{tikzcd}
    $$
    Taking the alternating sum of dimensions over the induced 4-term exact sequence, this gives
    $$\dim\Hom(L,\tau X) - \dim\Hom(P_1^X,L) + \dim\Hom(P_0^X, L) - \dim\Hom(X,L) = 0.$$
    On the other hand, we have that
    $$g_X \cdot \undim L = \dim\Hom(P_0^X,L) - \dim\Hom(P_1^X,L)$$
    by the definition of $g_X$. This proves the result.
\end{proof}

\begin{lemma}\cite[Proposition~3.13]{BST}\label{equiv} Let $X$ be indecomposable $\tau$-rigid. Then
$M$ is $g_X$-semistable if and only if $M\in \mathcal{J}(X)$.
\end{lemma}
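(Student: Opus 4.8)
The plan is to translate the definition of $g_X$-semistability into homological language using Lemma~\ref{interp}, which gives $g_X \cdot \undim L = \dim\Hom(X,L) - \dim\Hom(L,\tau X)$ for every module $L$, and then to exploit Lemma~\ref{nohom} to control the relevant $\Hom$-spaces. Applying Lemma~\ref{interp} to $L = M$, condition (a) of $g_X$-semistability becomes the equality $\dim\Hom(X,M) = \dim\Hom(M,\tau X)$; thus, to conclude $M \in \mathcal{J}(X)$ it will suffice to show that one of these two spaces vanishes, while conversely membership in $\mathcal{J}(X)$ gives (a) immediately.

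For the direction $M \in \mathcal{J}(X) \Rightarrow M$ is $g_X$-semistable, I would first note that (a) holds since both $\Hom(X,M)$ and $\Hom(M,\tau X)$ vanish by the definition of $\mathcal{J}(X)$. For (b), given any submodule $M' \subseteq M$, I would observe that every map $X \to M'$ becomes zero after composing with the monomorphism $M' \hookrightarrow M$, and hence is already zero; so $\Hom(X,M') = 0$. Lemma~\ref{interp} then yields $g_X \cdot \undim M' = -\dim\Hom(M',\tau X) \leq 0$, which is exactly condition (b).

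The more delicate direction is $M$ is $g_X$-semistable $\Rightarrow M \in \mathcal{J}(X)$. Here the idea is to test condition (b) against a cleverly chosen submodule: I would take $M'$ to be the trace of $X$ in $M$, that is, the submodule generated by the images of all morphisms $X \to M$. Since $M$ is finite-dimensional, finitely many such images suffice, so $M' \in \Gen X$, and Lemma~\ref{nohom} gives $\Hom(M',\tau X) = 0$. Lemma~\ref{interp} then reduces $g_X \cdot \undim M'$ to $\dim\Hom(X,M')$, and semistability forces this quantity to be $\leq 0$, hence $\Hom(X,M') = 0$. Because every morphism $X \to M$ factors through its image and thus through $M'$, this gives $\Hom(X,M) = 0$; condition (a) then forces $\Hom(M,\tau X) = 0$ as well, so $M \in \mathcal{J}(X)$.

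The main obstacle is the choice of the right submodule in the second direction; once one recognizes that the trace of $X$ in $M$ simultaneously lies in $\Gen X$ (so that Lemma~\ref{nohom} applies) and receives every map out of $X$, the semistability inequality does the rest. I do not expect the hypothesis that $X$ is indecomposable to play an essential role in the argument; it is the $\tau$-rigidity that matters, through Lemma~\ref{nohom}.
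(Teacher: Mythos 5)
Your proof is correct and follows essentially the same route as the paper: both directions rest on Lemma~\ref{interp} together with Lemma~\ref{nohom}, with the key step being to test semistability against a submodule of $M$ lying in $\Gen X$. The only cosmetic difference is that you argue directly via the trace of $X$ in $M$, whereas the paper argues by contradiction using the image of a single nonzero map $X \to M$; your closing observation that indecomposability of $X$ is not needed also matches the paper's proof, which never uses it.
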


\begin{proof} 
Suppose $M$ is semistable with respect to $g_X$.
In particular, by Lemma~\ref{interp}, $\dim\Hom(X,M)-\dim\Hom(M,\tau X)=0$. Suppose that
$\Hom(X,M)\ne 0$. So $M$ has a non-zero subobject $L$ in $\Gen X$.
By Lemma \ref{nohom}, $\Hom(L,\tau X)=0$, However, $\Hom(X,L)$ is nonzero.
So $g_X\cdot \undim L > 0$. This violates semistability of $M$.

Now suppose $M\in \mathcal{J}(X)$. It follows immediately that $g_X\cdot \undim M=0$.
Let $L$ be a subobject of $M$. Now $\Hom(X,L)$  is necessarily 0,
so $g_X\cdot \undim L$ is non-positive. Thus, $M$ is $g_X$-semistable.
\end{proof}

\begin{proposition} \label{linindep}\
\begin{enumerate}
    \item The dimension vectors of the modules in a $\tau$-exceptional
    sequence are linearly independent.
    \item The dimension vectors of the objects in a signed $\tau$-exceptional sequence are linearly independent.
    \item The dimension vectors of the modules in a brick-$\tau$-exceptional sequence are linearly independent.
\end{enumerate}

\end{proposition}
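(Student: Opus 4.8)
The plan is to prove (1) by induction on $k$, peeling off the last term $X_k$, and then to deduce (2) and (3) from (1). The whole argument is driven by the observation that the $g$-vector of an indecomposable $\tau$-rigid module furnishes a linear functional isolating that module from everything lying in its $\tau$-perpendicular category. Concretely, combining Lemmas~\ref{interp} and~\ref{equiv} shows that $g_{X_k}$ vanishes on the dimension vectors of all the earlier terms while being strictly positive on $\undim X_k$; this lets me strip off $X_k$ and recurse inside $\mathcal{J}(X_k) \approx \mod\Lambda'$.

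In detail, I would suppose $\sum_{i=1}^{k} c_i \undim X_i = 0$ and pair this relation with $g_{X_k}$. For $i < k$ the term $X_i$ lies in $\mathcal{J}(X_k)$ by Definition~\ref{def:tau_ex}, so by Lemma~\ref{equiv} it is $g_{X_k}$-semistable and hence $g_{X_k} \cdot \undim X_i = 0$. For $i = k$, Lemma~\ref{interp} together with the $\tau$-rigidity of $X_k$ gives $g_{X_k} \cdot \undim X_k = \dim \Hom(X_k, X_k) - \dim \Hom(X_k, \tau X_k) = \dim \mathrm{End}(X_k) > 0$, since $X_k$ is indecomposable and nonzero. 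Therefore $c_k = 0$, and the relation reduces to $\sum_{i<k} c_i \undim X_i = 0$, now a relation among objects of $\mathcal{J}(X_k)$. As $(X_1, \ldots, X_{k-1})$ is a $\tau$-exceptional sequence in $\mathcal{J}(X_k) \approx \mod\Lambda'$, the induction hypothesis (applied to $\Lambda'$, which has fewer simples) says that the dimension vectors $\undim_{\Lambda'} X_i$ computed inside $\mod\Lambda'$ are linearly independent.

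The one real obstacle is to bridge the two notions of dimension vector: the relation I have lives in $K_0(\mod\Lambda) = \mathbb{R}^n$, whereas the induction hypothesis controls $K_0(\mod\Lambda') = \mathbb{R}^{n-1}$. What is needed is that the $\Lambda$-dimension vector, restricted to the wide subcategory $\mathcal{J}(X_k)$, induces an \emph{injection} $K_0(\mathcal{J}(X_k)) \hookrightarrow K_0(\mod\Lambda)$; equivalently, that the $\Lambda$-dimension vectors of the simple objects of $\mathcal{J}(X_k)$ are linearly independent. This is where $\tau$-rigidity (rather than mere finiteness of the wide subcategory) is essential: a finite semibrick inside a homogeneous tube generates a wide subcategory whose simples can share a dimension vector, so some genuine input from $\tau$-tilting reduction must be used. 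I expect to resolve this by producing, for the relative projectives $Q_1, \ldots, Q_{n-1}$ of $\mathcal{J}(X_k) \approx \mod\Lambda'$, the identity $(\undim_{\Lambda'} M)_j = \dim \Hom(Q_j, M) = g_{Q_j} \cdot \undim M$ valid for all $M \in \mathcal{J}(X_k)$, where the second equality is Lemma~\ref{interp} once one knows $\Hom(M, \tau Q_j) = 0$ for $M \in \mathcal{J}(X_k)$ (which should follow from the reduction description of $\mathcal{J}(X_k)$, with the $Q_j$ chosen $\tau$-rigid and satisfying $\mathcal{J}(X_k) \subseteq {}^\perp \tau Q_j$). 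This exhibits $\undim_{\Lambda'} M$ as a fixed linear image of $\undim M$, yielding the required injectivity and closing the induction.

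Parts (2) and (3) then follow with little extra work. For (2), if $(\mathcal{U}_1, \ldots, \mathcal{U}_k)$ is a signed $\tau$-exceptional sequence then $(|\mathcal{U}_1|, \ldots, |\mathcal{U}_k|)$ is a $\tau$-exceptional sequence and $\undim \mathcal{U}_i = \pm \undim |\mathcal{U}_i|$; since rescaling by nonzero scalars preserves linear independence, (2) is immediate from (1). For (3), I run the same induction directly on the brick-$\tau$-exceptional sequence $(\beta(Y_1), \ldots, \beta(Y_k))$: for $i < k$ the brick $\beta(Y_i)$ lies in $\mathcal{J}(Y_k)$ (it is the cokernel, formed inside the abelian subcategory $\mathcal{J}(Y_k)$, of the radical endomorphisms of $Y_i \in \mathcal{J}(Y_k)$), so $g_{Y_k} \cdot \undim \beta(Y_i) = 0$ by Lemma~\ref{equiv}; while $g_{Y_k} \cdot \undim \beta(Y_k) = \dim \Hom(Y_k, \beta(Y_k)) - \dim \Hom(\beta(Y_k), \tau Y_k) > 0$, because the quotient map $Y_k \to \beta(Y_k)$ is nonzero and $\beta(Y_k) \in \Gen Y_k$ forces the second term to vanish by Lemma~\ref{nohom}. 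Peeling off $\beta(Y_k)$ and recursing inside $\mathcal{J}(Y_k)$, using the same dimension-vector comparison as in (1), finishes the proof.
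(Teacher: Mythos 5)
Your core argument coincides with the paper's proof of this proposition: an induction that peels off the last term using the functional $g_{X_k}$, which vanishes on $\undim X_i$ for $i<k$ by Lemma~\ref{equiv} and is strictly positive on $\undim X_k$ by Lemma~\ref{interp} together with $\tau$-rigidity; part (2) via $\undim X[1]=-\undim X$; part (3) via cokernel-closure of $\mathcal{J}(Y_k)$ and Lemmas~\ref{nohom} and~\ref{interp}. Moreover, the ``one real obstacle'' you isolate is a genuine one: the induction hypothesis controls $\Lambda'$-dimension vectors, i.e.\ classes in $K_0(\mathcal{J}(X_k))\otimes\mathbb{R}$, while the linear relation to be killed lives in $\mathbb{R}^n$, and passing between the two requires knowing that $\undim_{\Lambda'}$ is a fixed linear function of $\undim_{\Lambda}$ (equivalently, that the $\Lambda$-dimension vectors of the simples of $\mathcal{J}(X_k)$ are linearly independent). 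The paper's own proof treats this identification as immediate: its inductive step passes directly from the induction hypothesis to the assertion that the $\Lambda$-dimension vectors of $X_1,\dots,X_{n-1}$ span the hyperplane $g_{X_n}^{\perp}$. Your example with regular bricks over a tame algebra correctly shows this is not a formal consequence of wideness alone, so on this point your proposal is more careful than the published argument.

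The gap is in how you close this obstacle. Everything rests on the claim that the relative projectives $Q_j$ of $\mathcal{J}(X_k)$ satisfy $\mathcal{J}(X_k)\subseteq{}^{\perp}\tau Q_j$, which you do not prove ($\tau$-rigidity of $Q_j$ is beside the point, since Lemma~\ref{interp} needs no hypothesis on the module whose $g$-vector is used). This claim is not available for free: by Jasso's reduction, the indecomposable relative projectives of $\mathcal{J}(X_k)$ are the torsion-free quotients $f_{X_k}T_j=T_j/t_{X_k}T_j$, with respect to the torsion pair $(\Gen X_k,\,X_k^{\perp})$, of the indecomposable summands $T_j$ of the Bongartz complement of $X_k$; such quotients need not be $\tau$-rigid, and relative projectivity in $\mathcal{J}(X_k)$ only yields $\Ext^1_{\Lambda}(Q_j,M)=0$ for $M\in\mathcal{J}(X_k)$, which by the Auslander--Reiten formula is strictly weaker than $\Hom(M,\tau Q_j)=0$. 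The repair is to run your identity with the $T_j$ in place of the $Q_j$. Let $T=X_k\oplus T_1\oplus\cdots\oplus T_{n-1}$ be the Bongartz completion of $X_k$, so $T$ is $\tau$-tilting and $\Gen T={}^{\perp}\tau X_k\supseteq\mathcal{J}(X_k)$ \cite{AIR}, and recall that Jasso's equivalence is $F=\Hom(T,-)\colon\mathcal{J}(X_k)\to\mod\Lambda'$ with $\Lambda'=\mathrm{End}(T)/\langle e_{X_k}\rangle$ \cite[Theorem~4.12]{jasso}. For $M\in\mathcal{J}(X_k)$ one then has: (i) $\Hom(M,\tau T_j)=0$ by Lemma~\ref{nohom} applied to $T$, whence $\dim\Hom(T_j,M)=g_{T_j}\cdot\undim_{\Lambda}M$ by Lemma~\ref{interp}; and (ii) $(\undim_{\Lambda'}FM)_j=\dim \bar{e}_j\Hom(T,M)=\dim\Hom(T_j,M)$, the $\Lambda'$-module structure on $\Hom(T,M)$ being well defined because $\Hom(X_k,M)=0$. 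Hence $(\undim_{\Lambda'}FM)_j=g_{T_j}\cdot\undim_{\Lambda}M$ for all $j$, which is exactly the linear comparison you wanted; it also recovers your identity $\dim\Hom(Q_j,M)=\dim\Hom(T_j,M)$, since $t_{X_k}T_j\in\Gen X_k$ admits no nonzero map to $M$. With this substitution your induction closes, and parts (2) and (3) go through as you wrote them.
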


\begin{proof}
(1,3) It suffices to prove the results for complete sequences. Let $(X_1,\ldots,X_n)$ be a complete $\tau$-exceptional sequence and  $(\beta(X_1),\ldots,\beta(X_n))$ the corresponding complete brick-$\tau$-exceptional sequence. The proof is by induction on $n$. Both statements
 certainly hold for $n=1$. Suppose that
the results are known for complete (brick-)$\tau$-exceptional sequences of length
$n-1$. Since $X_n$ is $\tau$-rigid, $g_{X_n}$
is nonzero. The modules $X_i$ for $1 \leq i \leq n-1$ lie in $\mathcal{J}(X_n)$, so their dimension vectors lie in the orthogonal
complement of $g_{X_n}$ by Lemma \ref{equiv}. It follows from the definition (together with the Auslander--Reiten formulas) that $\mathcal{J}(X_n)$ is closed under cokernels. Thus the modules $\beta(X_i)$ for $1 \leq i \leq n-1$ also lie in $\mathcal{J}(X_n)$, so their dimension vectors also lie in the orthogonal complement of $g_{X_n}$. By the induction hypothesis,
both sets of dimension vectors span this space. Now, Lemma \ref{interp} says that $g_{X_n} \cdot \dim X_n=
\dim \Hom(X_n,X_n)-\dim\Hom(X_n,\tau X_n)>0$. Thus, $\undim X_n$ does not lie in the
orthogonal complement of $g_{X_n}$. Similarly, Lemmas~\ref{nohom} and~\ref{interp} imply that $g_{X_n} \cdot \beta(X_n) > 0$, and so $\undim \beta(X_n)$ does not lie in the
orthogonal complement of $g_{X_n}$. It follows that both $\{\undim X_i\}_{i = 1}^n$ and $\{\undim \beta(X_i)\}_{i = 1}^n$ span $\mathbb{R}^n$ as desired.

(2) Let $(\mathcal{U}_1,\ldots,\mathcal{U}_k)$ be a signed $\tau$-exceptional sequence. Then $(|\mathcal{U}_1|,\ldots,|\mathcal{U}_k|)$ is a $\tau$-exceptional sequence, and so the dimension vectors of these modules are independent. The result thus follows from the fact that $\undim X[1] = -\undim X$ for $X \in \mod\Lambda$.
\end{proof}

We now prove our main result.

\begin{thm}\label{thm:main}
        Let $(X_1,\ldots,X_n)$ and $(Y_1,\ldots,Y_n)$ be  complete $\tau$-exceptional sequences. If there exists an index $j$ such that $X_i \cong Y_i$ for $i \neq j$, then also $X_j \cong Y_j$.
\end{thm}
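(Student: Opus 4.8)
The plan is to leverage the linear independence established in Proposition~\ref{linindep}(1) together with a dimension-counting argument. Since both $(X_1,\ldots,X_n)$ and $(Y_1,\ldots,Y_n)$ are complete $\tau$-exceptional sequences, Proposition~\ref{linindep}(1) tells us that $\{\undim X_i\}_{i=1}^n$ and $\{\undim Y_i\}_{i=1}^n$ are each linearly independent sets in $\mathbb{R}^n$, hence each forms a basis. By hypothesis, $X_i \cong Y_i$ for all $i \neq j$, so the two bases share the $n-1$ common vectors $\{\undim X_i\}_{i \neq j}$. First I would consider the hyperplane $W = \operatorname{span}\{\undim X_i \mid i \neq j\}$, which has dimension $n-1$ by linear independence.

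The key observation is that both $\undim X_j$ and $\undim Y_j$ must lie outside $W$, since each completes the shared collection to a basis of $\mathbb{R}^n$. I would then seek to pin down $\undim X_j$ and $\undim Y_j$ up to the ambiguity of $W$, i.e. show that $\undim X_j \equiv \undim Y_j$ in the quotient $\mathbb{R}^n / W$, which is one-dimensional. The natural tool is the $g$-vector machinery of Lemmas~\ref{interp} and~\ref{equiv}: if one can produce a functional that vanishes on all the shared modules but is nonzero and \emph{determined} on the remaining term, then the two remaining terms are forced to have equal dimension vectors. Concretely, I expect the argument to run by induction mirroring the structure of Proposition~\ref{linindep}, peeling off the last term $X_n \cong Y_n$ (when $j \neq n$) and passing to the $\tau$-perpendicular category $\mathcal{J}(X_n) \approx \mod\Lambda'$, where the two truncated sequences again differ in only one place and the inductive hypothesis applies. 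The base case $j = n$, where the sequences differ only in the final $\tau$-rigid term, is where the real content lies.

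For that base case, I would exploit that both $X_n$ and $Y_n$ are indecomposable $\tau$-rigid modules whose $g$-vectors are orthogonal to the common hyperplane $W = \operatorname{span}\{\undim X_i \mid i < n\}$, by Lemma~\ref{equiv} applied to the shared modules lying in $\mathcal{J}(X_n)$ and in $\mathcal{J}(Y_n)$ respectively. Since $W^\perp$ is one-dimensional, the $g$-vectors $g_{X_n}$ and $g_{Y_n}$ are parallel, and Lemma~\ref{interp} shows $g_{X_n} \cdot \undim X_n > 0$ and $g_{Y_n} \cdot \undim Y_n > 0$, which fixes their common sign. The plan is to conclude that $X_n$ and $Y_n$ are both indecomposable $\tau$-rigid modules with the same (up to positive scalar) $g$-vector, and then invoke the fact that an indecomposable $\tau$-rigid module is determined by its $g$-vector, forcing $X_n \cong Y_n$.

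The hard part will be making precise the claim that the two $\tau$-rigid terms with parallel $g$-vectors must coincide, and ensuring the inductive reduction to the base case is legitimate — in particular, that when $X_n \cong Y_n$ the identification $\mathcal{J}(X_n) \approx \mod\Lambda'$ carries both truncated sequences to genuine complete $\tau$-exceptional sequences over the same algebra $\Lambda'$ that still differ in exactly one place. I anticipate that the cleanest route avoids explicitly invoking uniqueness of $\tau$-rigid modules from $g$-vectors and instead argues entirely in terms of dimension vectors and semistability: showing directly that $\undim X_j = \undim Y_j$, and then separately upgrading equality of dimension vectors (together with the $\tau$-exceptional structure) to an isomorphism. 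Reconciling these two levels — the linear-algebraic equality of dimension vectors versus the categorical isomorphism of modules — is the step I expect to require the most care.
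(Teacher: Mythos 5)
Your overall skeleton is the same as the paper's: induction on $n$, reduction to the case $j = n$ via passage to $\mathcal{J}(X_n)$, the observation (using Proposition~\ref{linindep} and Lemma~\ref{equiv}) that $g_{X_n}$ and $g_{Y_n}$ both lie in the one-dimensional orthogonal complement of $W = \operatorname{span}\{\undim U_i \mid i < n\}$ and hence are parallel, and finally the fact that $\tau$-rigid modules are determined by their $g$-vectors (\cite[Theorem~6.5]{DIJ}) once the two $g$-vectors are known to be \emph{positive} multiples of each other. The gap is in the sign step. You assert that $g_{X_n}\cdot \undim X_n > 0$ and $g_{Y_n}\cdot \undim Y_n > 0$ ``fixes their common sign.'' It does not: each inequality pairs a $g$-vector against its \emph{own} module's dimension vector, and nothing forces $\undim X_n$ and $\undim Y_n$ to lie on the same side of the hyperplane $W$. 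If $g_{Y_n} = -\lambda g_{X_n}$ with $\lambda > 0$, both inequalities remain perfectly consistent (they then just say $\undim X_n$ and $\undim Y_n$ lie on opposite sides of $W$), so the negative-multiple case is not excluded by anything in your outline.

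Ruling out that case is exactly where the paper spends its main effort, and the argument is of a different nature from anything you propose: if $g_{Y_n}$ were a negative multiple of $g_{X_n}$, each shared module $U_i$ would be semistable with respect to both $g_{X_n}$ and $-g_{X_n}$, forcing every subobject --- hence every subquotient, hence every composition factor --- of every $U_i$ to have dimension vector orthogonal to $g_{X_n}$. By Proposition~\ref{linindep} at least $n-1$ distinct simples occur among these composition factors, so $g_{X_n}$ and $g_{Y_n}$ must be scalar multiples of the standard basis vector $e_m$ of the unique missing vertex; and the only multiple of a standard basis vector that is the $g$-vector of an indecomposable $\tau$-rigid module is that basis vector itself (for instance, a nonpositive multiple $c\,e_m$ would contradict $g_X \cdot \undim X = \dim\operatorname{End}(X) > 0$ from Lemma~\ref{interp}, and a positive multiple is handled by \cite[Theorem~6.5]{DIJ}). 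This makes $g_{X_n} = e_m = g_{Y_n}$, contradicting negativity. Without this step, or a substitute for it, your proof is incomplete. Finally, your suggested fallback --- proving $\undim X_n = \undim Y_n$ and then ``upgrading'' to an isomorphism --- is not a routine repair: dimension vectors do not determine modules in general, and such an upgrade would itself require a nontrivial theorem; the paper deliberately works with $g$-vectors precisely because those \emph{do} determine $\tau$-rigid modules.
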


\begin{proof}
    We prove the result by induction on $n$. If $n = 1$, then $\mod\Lambda$ contains a unique basic $\tau$-rigid module (namely $P(1)$), and there is nothing to show. Thus suppose the result holds for complete $\tau$-exceptional sequences of length $n-1$. If $X_n \cong Y_n$, we then have that $(X_1,\ldots,X_{n-1})$ and $(Y_1,\ldots,Y_{n-1})$ are complete $\tau$-exceptional sequences in $\mathcal{J}(X_n) = \mathcal{J}(Y_n)$ which satisfy the hypothesis of Theorem~\ref{thm:main}, and so the result follows from the induction hypothesis. It remains only to consider the case $j = n$.

  To simplify notation, we identify $U_i := X_i = Y_i$ for $i < n$. We know that for $1\leq i \leq n-1$, we have $g_{X_n} \cdot \undim U_i=0$, and
  the $\undim U_i$ are linearly independent by Proposition \ref{linindep}. Thus $g_{X_n}$ is necessarily in the
  one-dimensional orthogonal complement to the span of the $\undim U_i$.  The
  same thing is true for $g_{Y_n}$. Thus, $g_{X_n}$ and $g_{Y_n}$ are scalar multiples
  of each other.

If they are positive scalar multiples then we are done, as $\tau$-rigid modules are determined by their $g$-vectors. Indeed, if there exist $c, d \in \mathbb{N}$ such that $cg_{X_n} = dg_{Y_n}$, then the (non-basic) $\tau$-rigid modules $(X_n)^{\oplus c}$ and $(Y_n)^{\oplus d}$ have the same $g$-vector. By \cite[Theorem~6.5]{DIJ}, this means $(X_n)^{\oplus c} \cong (Y_n)^{\oplus d}$, and so $X_n \cong Y_n$ since both modules are indecomposable.

Consider now the case that $g_{Y_n}$ is a negative scalar multiple of $g_{X_n}$.
In this case, each $U_i$ is $g_{X_n}$- and $(-g_{X_n})$-semistable, which means all the
dimension vectors of subobjects of each $U_i$ are orthogonal to $g_X$.
Thus the
same is true for all subquotients of each $U_i$, and, in particular, for all the
simple modules appearing as composition factors of each $U_i$. At least $n-1$ simple modules must appear as composition factors, by Proposition \ref{linindep}.
This implies that $\bigoplus_{i=1}^{n-1} U_i$  is supported on $n-1$ vertices, while $g_{X_n}$ and $g_{Y_n}$
are scalar multiples of the standard basis vector corresponding to the
missing vertex. The only multiple of a standard basis vector which is the
$g$-vector of an indecomposable $\tau$-rigid module is the standard basis vector itself,
so it is not
possible that $g_{X_n}$ and $g_{Y_n}$ are negative scalar multiples, ruling out this case. 
This ends the proof.
\end{proof}

\begin{corollary}\label{cor:main}\
    \begin{enumerate}
        \item Let $(\mathcal{U}_1,\ldots,\mathcal{U}_n)$ and $(\mathcal{V}_1,\ldots,\mathcal{V}_n)$ be complete signed $\tau$-exceptional sequences. If there exists an index $j$ such that $|\mathcal{U}_i| \cong |\mathcal{V}_i|$ for $i \neq j$, then also $|\mathcal{U}_j| \cong |\mathcal{V}_j|$.
        \item Let $(X_1,\ldots,X_n)$ and $(Y_1,\ldots,Y_n)$ be a brick-$\tau$-exceptional sequences. If there exists an index $j$ such that $X_i \cong Y_i$ for $i \neq j$, then also $X_j \cong Y_j$.
    \end{enumerate}
\end{corollary}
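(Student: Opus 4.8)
The plan is to handle the two parts separately: part (1) reduces directly to Theorem~\ref{thm:main} by forgetting tags, whereas part (2) requires adapting the proof of Theorem~\ref{thm:main} rather than citing it as a black box, for reasons explained below.

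For part (1), I would invoke the first remark following Definition~\ref{def:tau_ex}: forgetting the tags of a signed $\tau$-exceptional sequence produces a $\tau$-exceptional sequence. Thus $(|\mathcal{U}_1|,\ldots,|\mathcal{U}_n|)$ and $(|\mathcal{V}_1|,\ldots,|\mathcal{V}_n|)$ are complete $\tau$-exceptional sequences agreeing in every position except possibly $j$. Since the desired conclusion involves only the underlying modules, no information about the tags is needed, and Theorem~\ref{thm:main} applies verbatim to give $|\mathcal{U}_j| \cong |\mathcal{V}_j|$.

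For part (2), write the given brick-$\tau$-exceptional sequences as $X_i = \beta(\tilde X_i)$ and $Y_i = \beta(\tilde Y_i)$ for $\tau$-exceptional sequences $(\tilde X_1,\ldots,\tilde X_n)$ and $(\tilde Y_1,\ldots,\tilde Y_n)$. The natural temptation is to lift the hypothesis $X_i \cong Y_i$ ($i \neq j$) to $\tilde X_i \cong \tilde Y_i$ and then apply Theorem~\ref{thm:main}; but this fails in general, because (as the second remark following Definition~\ref{def:tau_ex} warns) a single brick can have distinct $\tau$-rigid preimages under $\beta$ in two different subcategories $\mathcal{J}(\tilde X_n)$ and $\mathcal{J}(\tilde Y_n)$. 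Instead I would mirror the proof of Theorem~\ref{thm:main} directly, inducting on $n$ and playing the role of the $\tau$-exceptional terms with the bricks $\beta(\tilde X_i)$: one uses Proposition~\ref{linindep}(3) in place of part~(1), together with the containment $\beta(\tilde X_i) \in \mathcal{J}(\tilde X_n)$ for $i < n$ established in the proof of Proposition~\ref{linindep}. If $j \neq n$ then $X_n \cong Y_n$, so their lifts $\tilde X_n, \tilde Y_n$---which, being final terms, are genuinely $\tau$-rigid in $\mod\Lambda$---agree by the injectivity of the global brick-$\tau$-rigid correspondence; the two brick sequences then live in the common subcategory $\mathcal{J}(\tilde X_n)$, and the inductive hypothesis finishes this case.

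The main obstacle is the remaining case $j = n$, where the final terms, and hence the ambient subcategories $\mathcal{J}(\tilde X_n)$ and $\mathcal{J}(\tilde Y_n)$, may differ. Here I would argue exactly as in the negative-multiple step of Theorem~\ref{thm:main}: the common bricks $\beta(\tilde X_i) = \beta(\tilde Y_i)$ for $i < n$ lie in both $\mathcal{J}(\tilde X_n)$ and $\mathcal{J}(\tilde Y_n)$, hence are simultaneously $g_{\tilde X_n}$- and $g_{\tilde Y_n}$-semistable by Lemma~\ref{equiv}; since their dimension vectors are linearly independent by Proposition~\ref{linindep}(3), the vectors $g_{\tilde X_n}$ and $g_{\tilde Y_n}$ are forced to be scalar multiples of one another, and the semistability of all subquotients rules out a negative multiple via the support argument of Theorem~\ref{thm:main}. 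A positive multiple yields $\tilde X_n \cong \tilde Y_n$, whence applying $\beta$ gives $X_n \cong Y_n$, completing the proof.
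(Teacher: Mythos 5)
Your proposal is correct and takes essentially the same route as the paper: part (1) by forgetting tags and citing Theorem~\ref{thm:main}, and part (2) by induction on $n$, lifting the bricks through $\beta$, handling $j \neq n$ via the common subcategory $\mathcal{J}(\overline{X_n})$ and the induction hypothesis, and handling $j = n$ by re-running the $j = n$ case of Theorem~\ref{thm:main} with the bricks (and Proposition~\ref{linindep}(3)) in place of the common $\tau$-rigid modules. The paper compresses that last step into the remark that one may ``replace $X_n$ with $\overline{X_n}$ and $Y_n$ with $\overline{Y_n}$ in the proof of Theorem~\ref{thm:main}''; your write-up simply makes explicit the details (membership of the bricks in both perpendicular categories, semistability, linear independence, and the support argument) that this replacement relies on.
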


\begin{proof}
    (1) This is an immediate consequence of Theorem~\ref{thm:main} and the fact that $(|\mathcal{U}_1|,\ldots,|\mathcal{U}_n|)$ and $(|\mathcal{V}_1|,\ldots,|\mathcal{V}_n|)$ are both complete $\tau$-exceptional sequences.

    (2) We prove the result by induction on $n$. If $n = 1$, then the only brick-$\tau$-exceptional sequence in $\mod\Lambda$ is $(S(1))$, where $S(1)$ is the simple top of the projective $P(1)$. Thus suppose the result holds for complete brick-$\tau$-exceptional sequences of length $n-1$.

    Let $(\overline{X_1},\ldots,\overline{X_n})$ and $(\overline{Y_1},\ldots,\overline{Y_n})$ be the (unique) $\tau$-exceptional sequences which satisfy $\beta(\overline{X_i}) = X_i$ and $\beta(\overline{Y_i}) = Y_i$ for all $1 \leq i \leq n$. Then $\overline{X_n} \cong \overline{Y_n}$ if and only if $X_n \cong Y_n$ by \cite[Theorem~1.4]{DIJ}. Now if $j \neq n$, then $(X_1,\ldots,X_{n-1})$ and $(Y_1,\ldots,Y_{n-1})$ are brick-$\tau$-exceptional sequences in $\mathcal{J}(\overline{X_n})$ which satisfy the hypotheses of Theorem~\ref{thm:main}, so the result follows from the induction hypothesis in this case. Thus consider the case $j = n$. In this case, we can replace $X_n$ with $\overline{X_n}$ and $Y_n$ with $\overline{Y_n}$ in the proof of Theorem~\ref{thm:main} (for the case $j = n$) to deduce that $\overline{X_n} \cong \overline{Y_n}$. This concludes the proof.
\end{proof}

\begin{corollary}\label{cor:unique_wide}\
    \begin{enumerate}
        \item Let $(X_1,\ldots,X_n)$ be a complete $\tau$-exceptional sequence. Then for each $1 \leq j \leq n$ there exists a unique $\tau$-perpendicular subcategory $\mathcal{W}_j \subseteq \mod\Lambda$ such that $(X_1,\ldots,X_j)$ is a complete $\tau$-exceptional sequence in $\mathcal{W}_j$.
        \item Let $(\mathcal{U}_1,\ldots,\mathcal{U}_n)$ be a complete signed $\tau$-exceptional sequence. Then for each $1 \leq j \leq n$ there exists a unique $\tau$-perpendicular subcategory $\mathcal{W}_j \subseteq \mod\Lambda$ such that $(\mathcal{U}_1,\ldots,\mathcal{U}_j)$ is a complete signed $\tau$-exceptional sequence in $\mathcal{W}_j$.
        \item Let $(X_1,\ldots,X_n)$ be a complete brick-$\tau$-exceptional sequence. Then for each $1 \leq j \leq n$ there exists a unique $\tau$-perpendicular subcategory $\mathcal{W}_j \subseteq \mod\Lambda$ such that $(X_1,\ldots,X_j)$ is a complete brick-$\tau$-exceptional sequence in $\mathcal{W}_j$.
    \end{enumerate}
\end{corollary}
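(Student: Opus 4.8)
The plan is to establish part (1) and then deduce (2) and (3) from it, mirroring the structure of Corollary~\ref{cor:main}. For (2), I would use that $(|\mathcal{U}_1|,\ldots,|\mathcal{U}_n|)$ is a complete $\tau$-exceptional sequence (the remark following Definition~\ref{def:tau_ex}); applying (1) yields a unique $\tau$-perpendicular $\mathcal{W}_j$ in which $(|\mathcal{U}_1|,\ldots,|\mathcal{U}_j|)$ is complete, and since $(\mathcal{U}_1,\ldots,\mathcal{U}_j)$ is a complete signed sequence in $\mathcal{C}(\mathcal{W})$ precisely when its underlying unsigned sequence is complete in $\mathcal{W}$, uniqueness transfers verbatim. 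For (3), following the strategy of Corollary~\ref{cor:main}(2), I would pass to the unique complete $\tau$-exceptional sequence $(\overline{X_1},\ldots,\overline{X_n})$ with $\beta(\overline{X_i})=X_i$ via \cite[Theorem~1.4]{DIJ}, apply (1) to it, and verify that the resulting $\mathcal{W}_j$ is the unique $\tau$-perpendicular subcategory in which $(X_1,\ldots,X_j)$ is a complete brick-$\tau$-exceptional sequence. The delicate point, already flagged in \cite[Remark~3.19]{BaH}, is that $\beta$ changes upon restriction to $\mathcal{W}_j$, so one must check that the brick-$\tau$-exceptional sequence attached to $(\overline{X_1},\ldots,\overline{X_j})$ \emph{inside} $\mathcal{W}_j$ is again $(X_1,\ldots,X_j)$, rather than taking this for granted.

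For existence in (1) I would build the flag recursively: set $\mathcal{W}_n=\mod\Lambda$ and $\mathcal{W}_j=\mathcal{J}_{\mathcal{W}_{j+1}}(X_{j+1})$, the $\tau$-perpendicular category of the indecomposable $\tau$-rigid object $X_{j+1}$ computed inside $\mathcal{W}_{j+1}$. The recursion of Definition~\ref{def:tau_ex}(1) then makes $(X_1,\ldots,X_j)$ a complete $\tau$-exceptional sequence in $\mathcal{W}_j$. The only input required is that a $\tau$-perpendicular subcategory of a $\tau$-perpendicular subcategory is again one (transitivity of Jasso reduction), which I would cite from \cite{jasso,DIRRT} (see also \cite{BM_wide}).

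Uniqueness is the heart of the matter, and the codimension-one case falls directly out of Theorem~\ref{thm:main}. Suppose $(X_1,\ldots,X_{n-1})$ is complete in two rank-$(n-1)$ $\tau$-perpendicular subcategories $\mathcal{W}=\mathcal{J}(Z)$ and $\mathcal{W}'=\mathcal{J}(Z')$, where $Z,Z'$ are indecomposable $\tau$-rigid (every rank-$(n-1)$ $\tau$-perpendicular subcategory has this form). By Definition~\ref{def:tau_ex}(1), both $(X_1,\ldots,X_{n-1},Z)$ and $(X_1,\ldots,X_{n-1},Z')$ are complete $\tau$-exceptional sequences in $\mod\Lambda$ agreeing in their first $n-1$ entries, so Theorem~\ref{thm:main} forces $Z\cong Z'$ and hence $\mathcal{W}=\mathcal{W}'$.

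To reach arbitrary $j$, I would characterize $\mathcal{W}_j$ intrinsically as the wide subcategory of $\mod\Lambda$ generated by $X_1,\ldots,X_j$, a description manifestly independent of any completion: if every $\tau$-perpendicular $\mathcal{W}$ in which $(X_1,\ldots,X_j)$ is complete coincides with this wide closure, uniqueness is immediate. Concretely, such a $\mathcal{W}$ is a wide subcategory containing the $X_i$, and by Proposition~\ref{linindep}(1) applied inside $\mathcal{W}$ the classes $\undim X_1,\ldots,\undim X_j$ are linearly independent in $K_0(\mathcal{W})$, hence span it. The claim then reduces to the assertion that a wide subcategory of $\mathcal{W}$ whose objects' dimension vectors span $K_0(\mathcal{W})$ must equal $\mathcal{W}$ — equivalently, that a complete $\tau$-exceptional sequence generates its ambient $\tau$-perpendicular category as a wide subcategory. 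This generation statement is where I expect the main obstacle to lie: linear independence only controls the \emph{span} of dimension vectors, and a wide subcategory (unlike a Serre subcategory) need not contain the composition factors of its objects, so spanning does not formally force equality. Overcoming it should require the finer structure of $\tau$-perpendicular categories — functorial finiteness and the reduction theory of \cite{jasso,DIRRT,BM_wide} — or, alternatively, bootstrapping the clean codimension-one argument above along an induction on codimension; the difficulty in the latter is that an arbitrary $\mathcal{W}$ need not sit inside the canonical $\mathcal{W}_{j+1}$, so the completions used to invoke Theorem~\ref{thm:main} cannot be chosen naively and must be reconciled.
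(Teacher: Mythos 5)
Much of your outline coincides with the paper's actual proof: existence via the recursive flag $\mathcal{W}_j=\mathcal{J}_{\mathcal{W}_{j+1}}(X_{j+1})$ (with transitivity of $\tau$-perpendicularity as the input), the deduction of (2) from (1) by passing to underlying unsigned sequences, the treatment of (3) via the $\tau$-rigid lifts $\overline{X_i}$ (the paper, like you, does \emph{not} quote (1) verbatim but reruns the key argument with $\mathcal{J}(\overline{X_n})$ and $\beta(Y)$, precisely because of the $\beta$-restriction subtlety you correctly flag), and above all the corank-one uniqueness argument: any candidate is $\mathcal{J}(Y)$ with $Y$ basic $\tau$-rigid, the rank count forces $Y$ indecomposable, so $(X_1,\ldots,X_{n-1},Y)$ is a complete $\tau$-exceptional sequence and Theorem~\ref{thm:main} yields $Y\cong X_n$, whence $\mathcal{J}(Y)=\mathcal{J}(X_n)$. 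This part of your proposal is correct and essentially identical to the paper's second paragraph.

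The genuine gap is that you never finish (1) for $j<n-1$. The wide-closure characterization you propose is a dead end as you yourself diagnose: linear independence (Proposition~\ref{linindep}) controls only the span of dimension vectors, and you have no argument that a complete $\tau$-exceptional sequence generates its ambient $\tau$-perpendicular category as a wide subcategory. The paper neither proves nor needs this generation statement. It instead runs exactly the ``bootstrap'' you mention and then abandon: reverse induction on $j$. The induction hypothesis pins down the unique $\mathcal{W}_{j+1}$, and the missing external ingredient is \cite[Theorem~6.4]{BuH}, which identifies $\tau$-perpendicular subcategories of the $\tau$-perpendicular subcategory $\mathcal{W}_{j+1}$ with $\tau$-perpendicular subcategories of $\mod\Lambda$; since $\mathcal{W}_{j+1}\simeq\mod\Lambda'$, the problem at level $j$ for $(X_1,\ldots,X_n)$ over $\Lambda$ is replaced, without loss of generality, by the problem at level $n'-1$ for $(X_1,\ldots,X_{j+1})$ over $\Lambda'$, i.e.\ by the corank-one case already settled. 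Your objection that an arbitrary candidate $\mathcal{W}$ ``need not sit inside the canonical $\mathcal{W}_{j+1}$'' points at exactly what this without-loss-of-generality step must absorb, and the paper is admittedly terse there; but raising the obstacle is not overcoming it. As written, your proposal establishes existence, the case $j=n-1$, and parts (2)--(3) conditional on (1), while the heart of (1) — uniqueness at arbitrary $j$ — remains unproved; the Buan--Hanson transitivity theorem inside a reverse induction is the device that closes it.
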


\begin{proof}
    (1) We prove the results by reverse induction on $j$. For $j = n$, we note that the only $\tau$-perpendicular subcategory of $\mod\Lambda$ which contains $n$ simple modules is $\mod\Lambda = \mathcal{J}(0)$. Thus suppose the result holds for $j+1 \leq n$. Then there exists a unique $\tau$-perpendicular subcategory $\mathcal{W}_{j+1}$ such that $(X_1,\ldots,X_{j+1})$ is a complete $\tau$-exceptional sequence in $\mathcal{W}_{j+1}$. It is shown in \cite[Theorem~6.4]{BuH} that any $\tau$-perpendicular subcategory of $\mathcal{W}_{j+1}$ is also a $\tau$-perpendicular subcategory of $\mod\Lambda$. Thus since $\mathcal{W}_{j+1}$ is equivalent to $\mod\Lambda'$ for some finite-dimensional algebra $\Lambda'$, we can thus suppose without loss of generality that $j = n-1$ and $\mathcal{W}_{j+1} = \mod\Lambda$.

    By definition, we know that $(X_1,\ldots,X_{n-1})$ is a complete $\tau$-exceptional sequence in the $\tau$-perpendicular subcategory $\mathcal{J}(X_n)$. Thus let $Y \in \mod\Lambda$ be a basic $\tau$-rigid module and suppose that $(X_1,\ldots,X_{n-1})$ is a complete $\tau$-exceptional sequence in $\mathcal{J}(Y)$. In particular, this means $\mathcal{J}(Y)$ contains $n-1$ simple objects, and so $Y$ is indecomposable. It follows that $(X_1,\ldots,X_{n-1},Y)$ is a complete $\tau$-exceptional sequence, and so $X \cong Y$ by Theorem~\ref{thm:main}. This implies that $\mathcal{J}(X) = \mathcal{J}(Y)$, as desired.

    (2) This follows immediately from (1) and the fact that $(|\mathcal{U}_1|,\ldots,|\mathcal{U}_n|)$ is a complete $\tau$-exceptional sequence.

    (3) This follows from making slight modifications to the second paragraph of the proof of (1). Namely, we replace $\mathcal{J}(X_n)$ with $\mathcal{J}(\overline{X_n})$, where $\overline{X_n}$ is the unique indecomposable $\tau$-rigid module which satisfies $\beta(\overline{X_n}) = X_n$. We likewise replace $(X_1,\ldots,X_{n-1},Y)$ with $(X_1,\ldots,X_{n-1},\beta(Y))$ and then apply the brick-$\tau$-exceptional sequence version of Theorem~\ref{thm:main}.
\end{proof}

\end{document}